\documentclass{amsart}
\usepackage{amsfonts}
\usepackage{amsmath,amssymb}	
\usepackage{amsthm}
\usepackage{amscd}
\usepackage{graphics}
\usepackage{graphicx}
{
\theoremstyle{definition}
\newtheorem{Def}{Definition}

\newtheorem{Rem}{Remark}

}

\newtheorem{Prop}{Proposition}
\newtheorem{Thm}{Theorem}

\begin{document}
\title[cobordism-like modules generic maps of codimension -2 induce]{Structures of cobordism-like modules induced from generic maps of codimension -2}

\author{Naoki Kitazawa}
\keywords{Singularities of differentiable maps; generic maps. Differential topology. Reeb spaces}
\subjclass[2010]{Primary~57R45. Secondary~57N15.}
\address{Institute of Mathematics for Industry, Kyushu University, 744 Motooka, Nishi-ku Fukuoka 819-0395, Japan}
\email{n-kitazawa.imi@kyushu-u.ac.jp}
\maketitle
\begin{abstract}
The {\it Reeb space} of a smooth map whose codimension is minus is the space defined as the space of all connected components of inverse images. For generic maps such as Morse functions and their higher dimensional versions, they are polyhedra whose dimensions are equal to those of the target manifolds and which have simplicial structures compatible with (the canonical) simplicial structures of the source and the target manifolds, and in considerable cases they inherit fundamental and important invariants of source manifolds. In fact, Reeb spaces are fundamentall tools in the algebraic and differential topological theory of generic maps or the global singularity theory.
As one of studies of global topological properties of Reeb spaces, Hiratuka and Saeki showed in 2013 that for generic maps or more precisely, maps compatible with simplicial structures of the manifolds, inducing simplicial structures on the Reeb spaces and having connected components of inverse images of regular values being not null-cobordant, the top-dimensional homology groups with appropriate coefficient rings of the Reeb spaces do not vanish. Later the author extended this theorem: the author has introduced cobordism-like groups based on adjacent relations of connected components of inverse images of regular values and shown a similar theorem.

  In this paper, as a fundamental and new study, we study structures of the cobordism-like modules in cases where the codimensions are -2 for a suitable class of generic maps and show that the structures are flexible.  
\end{abstract}

\section{Introduction}
\label{sec:1}
\subsection{Reeb spaces, generic maps compatible with triangulations of manifolds and their Reeb spaces.}
The {\it Reeb space} of a map is the space defined as the space of all connected components of inverse images. For a continuous map $c:X \rightarrow Y$, we denote the space by $W_c$, and the quotient map by $q_f:X \rightarrow W_c$. We can define the map $\bar{c}$ uniquely defined so that the relation $c=\bar{c} \circ q_c$ holds.
 Reeb spaces are fundamental and important tools in the algebraic and differential topological theory of Morse functions and their higher dimensional versions or the global singularity theory. See also \cite{reeb} and \cite{sharko} for example for the fundamental algebraic, geometric and differential topological theory of Reeb spaces.
 
For example, for generic maps including Morse functions and their higher dimensional versions such as {\it fold} maps etc. and more generally, maps belonging to the class of {\it Thom} maps, existing densely in any space consisting of all smooth maps between smooth manifolds, the Reeb spaces are polyhedra compatible with the canonical PL structures of the source and target manifolds (\cite{shiota}). In addition, {\it topological} stable maps, which are defined as smooth maps such that by slight deformations the topologies of the sets of all singular points, those of singular values and maps themselves are invariant topologically, are also Thom maps and always exist densely, for example, the dimensions of the Reeb spaces are same as those of the target manifolds; 
 moreover, for such maps, for suitable cases, the Reeb spaces inherit fundamental and important invariants such as homology groups etc. of the source manifolds.

Morse functions such that at distinct singular points the values are distinct, existing densely on any closed manifold, are simplest examples of topologically stable maps. They are also {\it stable} maps, class $C^{\infty}$ versions of topologically stable maps: stable maps exist densely for considerable pairs of the dimensions of the manifolds including the cases where the dimensions of the target manifolds are lower than $6$. For fundamental facts and discussions on the singularity theory of generic maps including stable maps, see \cite{golubitskyguillemin} for example.

For presented maps including good maps such as Thom maps, Hiratuka and Saeki (\cite{hiratukasaeki} and \cite{hiratukasaeki2}) have shown the following: if there exists a connected component of an inverse image of a regular value being (oriented) null-cobordant, then the top-dimensional homology group whose coefficient ring is $\mathbb{Z}/2\mathbb{Z}$ (resp. the oriented cobordism group of the dimension equal to the codimension or the difference of the dimensions of the source and the target manifolds) of the Reeb space does not vanish. Later, the author extended this by introducing general cobordism-like modules of manifolds whose dimensions are equal to the codimension observing adjacent relations of inverse images of regular values by such a map and by showing a theorem corresponding to this new module (\cite{kitazawa}).

\subsection{The contents of this paper, terminologies and notation.}

In this paper, as a different, new, fundamental and important study, we investigate a general property on structures of the modules introduced by the author in \cite{kitazawa} for an explicit class of maps of codimension $-2$ and show that the structures are flexible.

This paper is organized as the following.

In section \ref{sec:2}, we review {\it Reeb-triangulable} maps, whose Reeb spaces are polyhedra compatible with the PL structures of the target manifolds and defined first in \cite{kitazawa}. They are generalizations of smooth functions whose Reeb spaces are graphs. Maps compatible with simplicial structures of manifolds, explained in the introduction including Thom maps etc., are called {\it triangulable} maps in \cite{shiota} and later in \cite{hiratukasaeki} and \cite{hiratukasaeki2} and note that triangulable maps are Reeb-triangulable maps and that the converse facts hold in considerable cases. We also introduce {\it fold} maps, higher dimensional versions of Morse functions, and note that fold maps including stable ones are Reeb-triangulable. 
  
In section \ref{sec:3}, we review a module {\it compatible} with a Reeb-triangulable map first introduced in \cite{kitazawa}. It is a submodule of 
 a free module generated by all smooth, closed and connected manifolds whose dimensions are equal to the difference of the dimensions of the source and the target manifolds modulo the equivalence relation that two manifolds are diffeomorphic
 (in the oriented category) and it is a submodule including all elements obtained canonically by observing adjacent relations of inverse images of regular values of the map.

In section \ref{sec:4}, we present and show a main result. For this, methods used in
\cite{masumotosaeki}, \cite{michalak}, \cite{michalak2} etc. are key ingredients or closely related. As a key proposition, we show Proposition \ref{prop:1} and this essentially leads us to complete the proof of the main theorem.

In this paper, manifolds and maps between them are smooth and of class $C^{\infty}$ unless otherwise stated. The {\it singular set} of a smooth map is defined as the set of all singular points of the map, the {\it singular value set} of the map is defined as
 the image of the singular set and the {\it regular value set} of the map is the complement of the singular value set.
  
Throughout this paper, $M$ is a closed manifold of dimension $m \geq 1$, $N$ is a manifold of dimension $n$ without boundary satisfying
  the relation $m > n \geq 1$, and $f$ is a map from $M$ into $N$.
   Moreover, we denote the singular set of the map by $S(f)$.

The author is a member of the project and supported by the project Grant-in-Aid for Scientific Research (S) (17H06128 Principal Investigator: Osamu Saeki)
"Innovative research of geometric topology and singularities of differentiable mappings"
(https://kaken.nii.ac.jp/en/grant/KAKENHI-PROJECT-17H06128/
).
\section{Reeb-triangulable maps and fold maps}
\label{sec:2}

In \cite{kitazawa}, we introduced the following class of generic smooth maps.

\begin{Def}
\label{def:1}
A smooth map $c:X \rightarrow Y$ is said to be {\it Reeb-triangulable} if the Reeb space $W_c$ 
is regarded as a simplicial complex satisfying the following.
\begin{enumerate}
\item The inverse images of the interior of each simplex of dimension $\dim W_c$ of $W_c$ contains no singular point.  
\item The set of all points in $W_c$ whose inverse images have singular points forms a subcomplex of $W_c$.
\item For the map $\bar{c}:W_c \rightarrow Y$, there exists a pair $(\Phi,\phi)$ of homeomorphisms
 where $\Phi$ makes $W_c$ the same simplicial complex as already given one and where $\phi$ makes $Y$ the same PL manifold as already canonically given one and the map ${\phi}^{-1} \circ \bar{c} \circ \Phi$ is a simplicial map.
\item $\dim W_c=\dim Y$.
\end{enumerate}
\end{Def}

Morse functions on closed manifolds and smooth functions on closed manifolds such that the singular sets are finite are Reeb-triangulable. More generally, smooth functions whose Reeb spaces are graphs are Reeb-triangulable.

{\it Fold} maps, which are higher dimensional versions of Morse functions are also Reeb-triangulable and we review the definition of a {\it fold} map. The main theorem is for a certain class of fold maps.
See also \cite{golubitskyguillemin} and for a precise explicit differential topological study, see \cite{saeki} for example.

\begin{Def}
\label{def:2}
A smooth map is said to be a {\it fold} map if at each singular point $p$, the map is of the form
 $$(x_1,\cdots,x_n,\cdots,x_m) \mapsto (x_1,\cdots,x_{n-1},{\sum}_{j=n}^{m-i(p)} {x_j}^2-,{\sum}_{j=m-i(p)+1}^{m} {x_j}^2)$$
for an integer $0 \leq i(p) \leq \frac{m-n+1}{2}$.
\end{Def}
\begin{Prop}
\label{prop:1}
The singular set of a fold map above is an {\rm (}$n-1${\rm )}-dimensional closed submanifold and the restriction map of a fold map to the singular set is an immersion of codimension $1$.
\end{Prop}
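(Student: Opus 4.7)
The plan is to work entirely from the local normal form given in Definition \ref{def:2} and then globalize. Fix a singular point $p$ and a chart around $p$ in which $f$ takes the stated form
$$
(x_1,\ldots,x_m) \mapsto \Bigl(x_1,\ldots,x_{n-1},\;\sum_{j=n}^{m-i(p)} x_j^{2} - \sum_{j=m-i(p)+1}^{m} x_j^{2}\Bigr).
$$
I would first compute the Jacobian $Df$ at an arbitrary point of this chart. The top-left $(n-1)\times(n-1)$ block is the identity, the bottom-left row (for the last output coordinate) vanishes in the variables $x_1,\ldots,x_{n-1}$, and the bottom-right entries are $\pm 2x_j$ for $j=n,\ldots,m$. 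Hence $Df$ has rank strictly less than $n$ if and only if $x_n=\cdots=x_m=0$. In the chart, therefore, $S(f)$ is exactly the linear subspace $\{x_n=\cdots=x_m=0\}$, which is a smooth $(n-1)$-dimensional submanifold.

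Next I would globalize. Since rank-deficiency of $Df$ is a coordinate-free condition, the local descriptions above glue: at every $p\in S(f)$ there is a chart in which $S(f)$ coincides with an $(n-1)$-plane, so $S(f)$ is a smooth $(n-1)$-dimensional submanifold of $M$. Because $\{\mathrm{rank}\,Df<n\}$ is closed in $M$ and $M$ is closed, $S(f)$ is compact and has no boundary, i.e.\ it is a closed submanifold.

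For the final assertion, I would read off the restriction from the same chart: the map $f|_{S(f)}$ sends
$$
(x_1,\ldots,x_{n-1},0,\ldots,0)\;\mapsto\;(x_1,\ldots,x_{n-1},\,0),
$$
whose differential has rank $n-1$, so $f|_{S(f)}$ is an immersion; since $\dim N = n$ and $\dim S(f)=n-1$, this is an immersion of codimension $1$.

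I do not expect a substantial obstacle: the whole statement is essentially a linear-algebra computation on the Hessian of the quadratic part of the normal form, and everything else is the standard observation that a subset locally cut out as a submanifold of fixed dimension is globally a submanifold. The only point requiring a minimum of care is noting that the index $i(p)$ may vary with the component of $S(f)$, but this has no effect on either dimension counts or on the rank of $Df|_{S(f)}$, so the conclusion is uniform across components.
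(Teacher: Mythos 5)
Your proof is correct, and it is exactly the standard argument: the paper itself states Proposition \ref{prop:1} without proof, treating it as a known fact from the singularity-theory literature (cf.\ Golubitsky--Guillemin and Saeki's notes on folds), and your rank computation from the normal form, together with the observations that a set which is locally an $(n-1)$-plane in charts is a submanifold and that the rank-deficiency locus is closed in the compact manifold $M$, is precisely the canonical fill-in. One cosmetic remark: the displayed normal form in Definition \ref{def:2} contains a stray ``$-,$''; your reading of it as the difference $\sum_{j=n}^{m-i(p)} x_j^{2} - \sum_{j=m-i(p)+1}^{m} x_j^{2}$ is the intended one, and your aside that the index $i(p)$ (locally constant on $S(f)$) does not affect the rank count is accurate.
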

Note that a fold map is stable if and only if the restriction map to the singular set is transversal. See also FIGURE \ref{fig:1}.
\begin{figure}
\includegraphics[width=60mm]{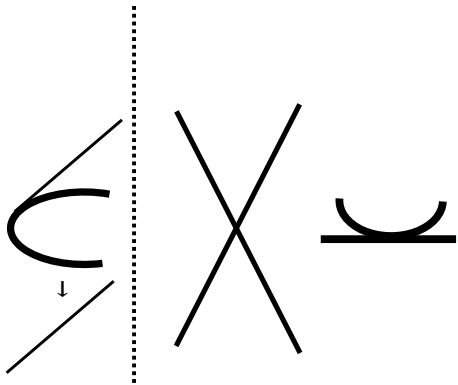}
\caption{A local subset of the singular set of a fold map and local subsets of the singular value sets of fold maps being stable and not being stable, respectively.}
\label{fig:1}
\end{figure}

\begin{Rem}
As a more general study, fold maps are generalized as {\it Morin} maps (\cite{morin}) and stable Morin maps exist densely when the dimension of the source manifold is not smaller than that of the target manifold and the dimension of the target manifold is not so large (smaller than $4$). (Stable) Morin maps are Reeb-triangulable and in the case where the dimension of the target manifold is not so large, stable maps are Reeb-triangulable and exist densely.
 In the proceeding sections, we only treat fold maps which may not be stable.  
\end{Rem}

\section{A cobordism-like module generated by equivalence classes of smooth closed and connected manifolds}
\label{sec:3}

 We introduce a cobordism-like module generated by equivalence classes of smooth, closed and connected manifolds of a fixed dimension, which was first introduced in \cite{kitazawa} and the explanation may overlap with that in the paper in considerable parts. 

We call an equivalence class obtained by considering the equivalence relation on the family of smooth manifolds defined by (orientation preserving) diffeomorphisms between smooth manifolds a {\it diffeomorphism type} (resp. an {\it oriented diffeomorphism type}).

  Let $k \geq 0$ be an integer. For a principle ideal domain $R$, we
   can define a free $R$-module ${\mathcal{N}}_k(R)$ (${\mathcal{O}}_k(R)$) generated by all (resp. oriented) diffeomorphism types of $k$-dimensional smooth, closed and connected (resp. oriented) manifolds so that distinct classes are mutually independent. 

 Let $f:M \rightarrow N$ be a Reeb-triangulable map from an $m$-dimensional closed {\rm (resp. and oriented{\rm )} manifold of dimension $m$ into an $n$-dimensional oriented manifold. We define a submodule $A \subset {\mathcal{N}}_{m-n}(R)$ {\rm (}resp. ${\mathcal{O}}_{m-n}(R)${\rm )} satisfying several conditions.

An arc smoothly embedded in $N$ is said to be {\it transverse} to $f$ if for each point $a$ in the interior and each point $b \in f^{-1}(a)$, the relation $df(T_b M) \oplus T_a S=T_a N$ holds and each point $a$ of the boundary is a regular value
 of $f$ ($f^{-1}(a)$ may be empty). 

Let $\Gamma$ be a triangulation of $N$ given by an appropriate homeomorphism compatible with the PL structure. Let ${\Gamma}_f$ be the set of all connected components of inverse images of all arcs transverse to the map $f$, having at most $1$ point of
 an ($n-1$)-dimensional face of a simplicial complex of $\Gamma$ and satisfying the following by $\bar{f}:W_f \rightarrow N$ satisfying $f=\bar{f} \circ q_f$.
\begin{enumerate}
\item The point in the face is not in any ($n-2$)-dimensional face.
\item The point in the face is in the interior of the curve in $N$.
\end{enumerate}
Let $\gamma \in {\Gamma}_{f}$ and let $A_{f,\gamma}$ be the set of all closed (resp. canonically oriented) connected submanifolds appearing as connected components of the inverse images of points in $\partial \gamma \bigcap {\rm Int} W_f$ by $f$.  The sum of the products of $1$ or $-1$ and (resp. oriented) diffeomorphism types of all submanifolds in $A_{f,\gamma}$ is defined and we denote it by $a_{f,\gamma}$: we set the coefficient of each diffeomorphism type according to the orientation of the ($n-1$)-dimensional face induced from the $n$-dimensional oriented simplex the boundary point of the closed interval in the target belongs to. For example, FIGURE \ref{fig:2} shows a local subspace of the Reeb space of a stable Morse function on an $m$-dimensional closed manifold around a value of a singular point corresponding to a $1$-handle or ($m-1$)-handle and in the figure, $a_1$, $a_2$ and $a_3$ represent the elements corresponding to the corresponding connected components of the inverse images of corresponding regular values (circles represent manifolds appearing as inverse images of regular values and two manifolds in the left and the manifold in the right are cobordant; the latter manifold is regarded as a connected sum of the former two manifolds) and the sum $a_{f,\gamma}$ is $a_1-a_2-a_3$ or $a_2+a_3-a_1$.
%

\begin{figure}
\includegraphics[width=35mm]{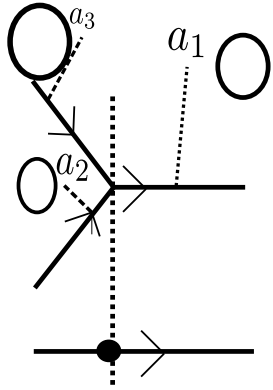}
\caption{The inverse image of a $1$-dimensional polyhedron $\gamma \in {\Gamma}_{f}$ in the Reeb space of a stable Morse function $f$ by the quotient map.}
\label{fig:2}
\end{figure}

\begin{Def}
\label{def:3}
In the discussion just before, if for some triangulation $\Gamma$ of $N$ and each $\gamma \in {\Gamma}_{f}$, $a_{f,\gamma} \in A$ holds, then the submodule $A \subset {\mathcal{N}}_{m-n}(R)$ (resp. ${\mathcal{O}}_{m-n}(R)$) is said to be {\it {\rm (resp.} oriented{\rm )} compatible} with $f$.
\end{Def}
Note that the sign of each $a_{f,\gamma} \in A$ does not affect on the definition.
\begin{Def}
\label{def:3}

Let $R$ be a principle ideal domain. In the discussion
 just before, let $A$ be an $R$-module (resp. oriented) compatible with a Reeb-triangulable map $f$ satisfying the following.

\begin{enumerate}
\item If $A$ contains all elements in ${\mathcal{N}}_{m-n}(R)$ (resp. ${\mathcal{O}}_{m-n}(R)$) corresponding to (resp. oriented) manifolds not appearing as connected components of inverse images
 of regular values, then $A$ is said to be {\it normally compatible} or {\it NC} with $f$.
\item We set the submodule generated by all elements corresponding to manifolds not appearing as connected components of inverse images of regular values as $A_1$. If $A$ is NC with $f$, thus $A_1$ is a submodule of $A$ and for a submodule $A_2$ generated by a finite number of elements represented as linear combinations of elements corresponding to manifolds appearing as connected components of inverse images of regular values, $A$ is represented as the direct sum of $A_1$ and $A_2$, then $A$ is said to be {\it completely normally compatible} or {\it CNC} with $f$. We call $A_1$ the {\it outer part} of $A$ and $A_2$ an {\it effective part} of $A$.
\item Let $A$ be CNC with $f$. An effective part $A_2$ of $A$ is said to be {\it minimal}, if $A_2$ is minimal for any effective part of any submodule CNC with $f$. If $A_2$ is minimal, then $A$ is said to be {\it minimal} as a module CNC with $f$ or $MCNC$ with $f$.  
\end{enumerate}  
\end{Def}

Note that a module MCNC with a Reeb-triangulable map is uniquely determined. In the next section, as a main theorem, conversely, we obtain a map from a given suitable module so that the original module is MCNC with the map.

\section{The main theorem}
\label{sec:4}
\begin{Thm}
\label{thm:1}
Let $R$ be a principle ideal domain. Let $A \subset {\mathcal{O}}_{2}(R)$ be a submodule satisfying the following.
\begin{enumerate}
\item $A$ includes elements corresponding to all oriented diffeomorphism types of closed, connected and oriented surfaces but a finite number of oriented diffeomorphism types and we denote the submodule generated by all of such elements by $A_1$.
\item $A$ includes a finite number of elements represented as linear combinations of oriented diffeomorphism types of closed, connected and oriented surfaces not in $A_1$, the submodule generated by all of such elements are not zero and we denote the submodule by $A_2$.   
\end{enumerate}
Thus, for a closed and connected oriented manifold $N$ of dimension $n$, there exist a closed and connected manifold $M$ of dimension $m=n+2$ and a map $f:M \rightarrow N$ such that $A$ is oriented compatible and MCNC with $f$. Moreover, $A_1$ is the outer part and $A_2$ is the minimal effective part.
\end{Thm}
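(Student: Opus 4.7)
The plan is constructive: I would build $f:M\to N$ by starting from a trivial surface bundle and performing local fold-map modifications over pairwise disjoint small $n$-disks in $N$, one modification per generator of $A_2$.

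Pick a closed connected oriented surface $\Sigma_0$ lying in the finite exceptional set complementary to $A_1$, and set $M_0:=N\times\Sigma_0$ with $f_0:=\mathrm{pr}_N$. This baseline map is a submersion, so its Reeb space is $N$, every fiber is $\Sigma_0$, and $M_0$ is a closed connected oriented $(n+2)$-manifold. For each generator $a=\sum_i c_i[\Sigma_i]$ of $A_2$, I would construct a compact oriented $3$-dimensional relative cobordism $(W,h)$, where $h:W\to D^1$ is a Morse function with $h^{-1}(\partial D^1)=\Sigma_0\sqcup\Sigma_0$ and whose critical values in the interior of $D^1$ produce, on a small subarc, a signed combination of the $\Sigma_i$ equal to $a$. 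That such $W$ and $h$ exist is standard: every closed oriented surface bounds an oriented $3$-manifold, and handle-decomposition arguments in the spirit of \cite{masumotosaeki}, \cite{michalak}, \cite{michalak2} yield $h$ with the prescribed boundary behavior. Taking the product $h\times\mathrm{id}_{D^{n-1}}:W\times D^{n-1}\to D^1\times D^{n-1}\cong D^n$ gives a fold model on $D^n$ whose boundary fibers are all $\Sigma_0$, and this model is grafted into $M_0$ over a small coordinate $n$-disk, replacing the trivial piece $D^n\times\Sigma_0$. Performing this once over each of finitely many pairwise disjoint disks in $N$, one per generator of $A_2$, defines $f$.

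The verifications to carry out are then: $M$ is closed, connected and oriented, since each modification is supported in a ball, preserves the $\Sigma_0$ fiber outside it, and therefore does not disconnect $M$; $f$ is a fold map, hence Reeb-triangulable by Section~\ref{sec:2}; the surfaces appearing as regular fibers lie in the finite exceptional set together with $\Sigma_0$, so the outer part of the MCNC module coincides with $A_1$; and each modification disk contributes an element $a_{f,\gamma}$ equal to the corresponding generator of $A_2$ for a suitable choice of arc $\gamma$ crossing the singular-value locus inside that disk.

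The main obstacle I anticipate is minimality of the effective part. One must ensure that the effective part produced by $f$ is not strictly larger than $A_2$: arcs $\gamma$ that traverse several modification disks, or that exploit the internal Morse structure of $h$ to produce finer cobordism relations, could in principle contribute extra elements outside $A_2$. Controlling this requires choosing a triangulation $\Gamma$ of $N$ keeping each modification disk inside a single top-dimensional simplex, choosing each $h$ so that its singular fibers realize $a$ and nothing finer, and handling coefficients from a general principal ideal domain $R$ (beyond $\pm 1$) by stacking oriented copies of the elementary cobordism models. I expect the key proposition announced at the start of Section~\ref{sec:4} to package exactly these controls into the reusable form that drives the rest of the proof.
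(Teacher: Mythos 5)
Your overall strategy (one local fold model per generator of $A_2$, extended trivially over the rest of $N$) is the paper's strategy, but two of your concrete steps would fail as stated. First, the grafting step: the model $h\times\mathrm{id}_{D^{n-1}}:W\times D^{n-1}\rightarrow D^1\times D^{n-1}\cong D^n$ does \emph{not} have boundary fibers all equal to $\Sigma_0$. Over the side part $D^1\times \partial D^{n-1}$ of the boundary the fibers are the level sets $h^{-1}(t)$, including the singular fiber, since the singular value set of $h\times\mathrm{id}_{D^{n-1}}$ is $\{c\}\times D^{n-1}$, a manifold \emph{with boundary} meeting $\partial D^n$. So this model cannot be glued into the trivial bundle $N\times\Sigma_0$ over a round disk. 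The paper avoids this by taking the product with $\mathrm{id}_{S^{n-1}}$, so the model lives over a shell $S^{n-1}\times D^1$, its singular value set is the closed manifold $\{c\}\times S^{n-1}$, and the map extends by trivial bundles over the complementary regions (one of which is an $n$-disk). Second, your boundary normalization $h^{-1}(\partial D^1)=\Sigma_0\sqcup\Sigma_0$ is actively harmful: to pass from the connected fiber $\Sigma_0$ to the fiber collection realizing $a_{+}$ (and back from the $a_{-}$ collection to $\Sigma_0$) you are forced to insert auxiliary singular values, and an arc crossing such a value contributes an element like $[\Sigma_0]-a_{+}$, which need not lie in $A=A_1\oplus A_2$ at all; this breaks oriented compatibility, not merely minimality. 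The paper instead lets the two ends of the cobordism be the (generally disconnected) collections realizing $a_{+}$ and $-a_{-}$ directly, with \emph{exactly one} singular value whose inverse image is \emph{connected} (conditions (4) and (5) of Proposition \ref{prop:2}, achieved by attaching all handles simultaneously at one level), then glues two copies of this local function so that both ends agree (FIGURE \ref{fig:5}) and extends by a trivial bundle with disconnected fiber; every transverse arc then yields $0$ or $\pm a$, and nothing else.

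The controls you deferred to ``the key proposition'' are precisely these two conditions of Proposition \ref{prop:2}; flagging them without supplying a mechanism leaves the minimality claim unproved, and your proposed fix of ``stacking oriented copies of the elementary cobordism models'' to handle coefficients makes things worse: stacked singular values create intermediate crossings whose elements are proper sub-combinations of $k\cdot a$ that need not lie in $A$. The paper encodes integer multiplicities as numbers of boundary components of ${\partial}_1P$ and ${\partial}_2P$ attached at a single singular level, and when some generator has only positive coefficients it splits it as $ka+(-(k-1)a)$ so both sides are nonempty. Finally, you omit the paper's STEP 2: your argument only shows the outer part \emph{contains} $A_1$; for equality you must ensure that \emph{every} surface type in the finite exceptional set actually occurs as a regular fiber, which the paper arranges by adding trivial bundles with the missing exceptional fibers. (Your STEP-free handling of connectedness via a single ambient $\Sigma_0$-bundle is a reasonable alternative to the paper's STEP 3, but it is moot until the gluing and compatibility problems above are repaired.)
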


The following is important in the proof of Theorem \ref{thm:1}. The method used in the proof is similar to one used in \cite{michalak}.

\begin{Prop}
\label{prop:2}
Let ${\partial}_1 P$ and ${\partial}_2 P$ be closed and oriented surfaces which may not be connected. Then there exist a $3$-dimensional, compact, connected and orientable manifold $P$ whose boundary is the disjoint union of ${\partial}_1 P$ and ${\partial}_2 P$ and a function on $P$ satisfying the following.
\begin{enumerate}
\item The function is not constant.
\item The function is Morse.
\item ${\partial}_1 P$ coincides with the inverse image of the minimal value of the function and  ${\partial}_2 P$ coincides with the inverse image of the maximal value of the function.
\item There exists just one singular value of the function and it is not the minimal value or the maximal one.  
\item The inverse image of the singular value by the function is connected.
\end{enumerate}
\end{Prop}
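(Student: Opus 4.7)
The plan is to construct $P$ explicitly as a cobordism with a carefully chosen handle structure and Morse function. I would set $P = (\partial_1 P \times [0, c - \varepsilon]) \cup W \cup (\partial_2 P \times [c + \varepsilon, 1])$ with $c = 1/2$, where $W$ is a compact orientable $3$-manifold with $\partial W = \partial_1 P \sqcup \partial_2 P$ admitting a Morse function $f_W \colon W \to [c - \varepsilon, c + \varepsilon]$ whose critical points all lie at the level $c$. The full Morse function on $P$ is the projection on the trivial collars and $f_W$ on $W$. Conditions $(1)$--$(4)$ then follow by construction provided that at least one critical point is present, so the entire content lies in condition $(5)$.

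To realise the prescribed boundary transition, I would attach finitely many $1$-handles and $2$-handles to the top of $\partial_1 P \times [c - \varepsilon, c]$, all at the single level $c$, by choosing pairwise disjoint attaching regions in $\partial_1 P \times \{c\}$: pairs of disks for $1$-handles and annular neighborhoods of embedded circles for $2$-handles. Writing $\partial_1 P$ as a disjoint union of $k_1$ closed oriented components, I would include $k_1 - 1$ ``merging'' $1$-handles whose attaching pairs of disks lie in distinct components, arranged as a spanning tree on $\pi_0(\partial_1 P)$. Additional $1$-handles and $2$-handles are then attached so that the simultaneous surgery yields $\partial_2 P$: raising genus via $1$-handles within a component, reducing genus via $2$-handles along non-separating curves, and splitting components via $2$-handles along separating curves when such curves exist, or via configurations of pairs of parallel non-separating curves when they do not (as is needed, for instance, to split a torus into two tori). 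If $\partial_1 P$ and $\partial_2 P$ are already equal, a cancelling $1$-$2$ handle pair is inserted to keep conditions $(1)$ and $(4)$ non-vacuous. Orientability and connectedness of $W$ follow from compatibly orienting every handle and from the presence of the spanning $1$-handles.

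For condition $(5)$, I would use the standard Morse-theoretic fact that the slab $f^{-1}([c - \varepsilon, c + \varepsilon])$ deformation-retracts onto the critical level set $f^{-1}(c)$ along a gradient-like vector field, and is also homotopy equivalent to $\partial_1 P$ with a $1$-cell attached for each index-$1$ critical point and a $2$-cell for each index-$2$ critical point. Only the $1$-cells affect $\pi_0$; the $k_1 - 1$ merging $1$-handles above provide a spanning tree on $\pi_0(\partial_1 P)$, which forces $\pi_0(f^{-1}(c))$ to be trivial.

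The main obstacle is to verify that the simultaneous surgery on $\partial_1 P$ with disjoint attaching data can realise any prescribed $\partial_2 P$. Simultaneous surgery is strictly more restrictive than sequential surgery, because separating curves on the already-surgered surface need not correspond to separating curves on $\partial_1 P$ itself. The remedy is to permit enough $1$-handles first (for genus raising and component merging) and to exploit configurations such as pairs of parallel non-separating $2$-handles to achieve effective component splitting. Writing out this flexible surgery recipe in detail is the bulk of the argument and parallels the constructions in \cite{michalak}.
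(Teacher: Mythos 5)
Your proposal is correct and follows essentially the same route as the paper's proof: the paper likewise attaches mutually disjoint $1$- and $2$-handles simultaneously at a single critical level to a collar ${\partial}_1 P \times [-1,0]$ (there organized by first standardizing to $S^2$ --- merging components and killing genus --- then splitting into spheres by $2$-handles and regrowing genus by $1$-handles, which is exactly your simultaneous-surgery recipe, your ``parallel non-separating pairs'' being the pull-backs to ${\partial}_1 P$ of the splitting curves on the intermediate sphere). If anything you are more explicit than the paper, which asserts the simultaneous realization and condition (5) without argument, whereas you verify connectedness of the singular fibre via the retraction of the slab onto the critical level together with the spanning tree of merging $1$-handles on ${\pi}_0({\partial}_1 P)$.
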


\begin{proof}
The proof is essentially based on a fundamental fact on the differential topological theory of Morse functions, which relates a singular point to an attachment of a handle to a compact manifold represented as the inverse image of the set of all real numbers not larger than a suitable value close to and smaller than the singular value and conversely, an attachment of a handle to a singular point. Such an argument appears in \cite{michalak} in the case where the boundary is a disjoint union of standard spheres of a general dimension.

To a closed and oriented surface ${\partial_1} P$ which may not be connected and which is regarded as ${\partial_1} P \times \{0\} \subset {\partial_1} P \times [-1,0]$, by attaching handles whose indice are $1$ or $2$ being mutually disjoint as presented in FIGURE \ref{fig:3} simultaneously suitably, we can change the surface into a new surface ${\partial_2} P$. More precisely, we add $1$-handles and $2$-handles to change ${\partial_1} P$ into $S^2$, decompose it into finite copies of $S^2$ (the number is equal to that of connected components of a desired surface) and then attach $1$-handles to each component being diffeomorphic to $S^2$: we can demonstrate this operation at the same time to obtain a desired surface. 
 This represents a step of changing the inverse image of the set of all real numbers not larger than a suitable value close to and smaller than a fixed singular value into the inverse image of the set of all real numbers not larger than a suitable value close to and larger than a fixed singular value. 

This gives a desired manifold and a function as FIGURE \ref{fig:4}. This completes the proof.

\begin{figure}
\includegraphics[width=100mm]{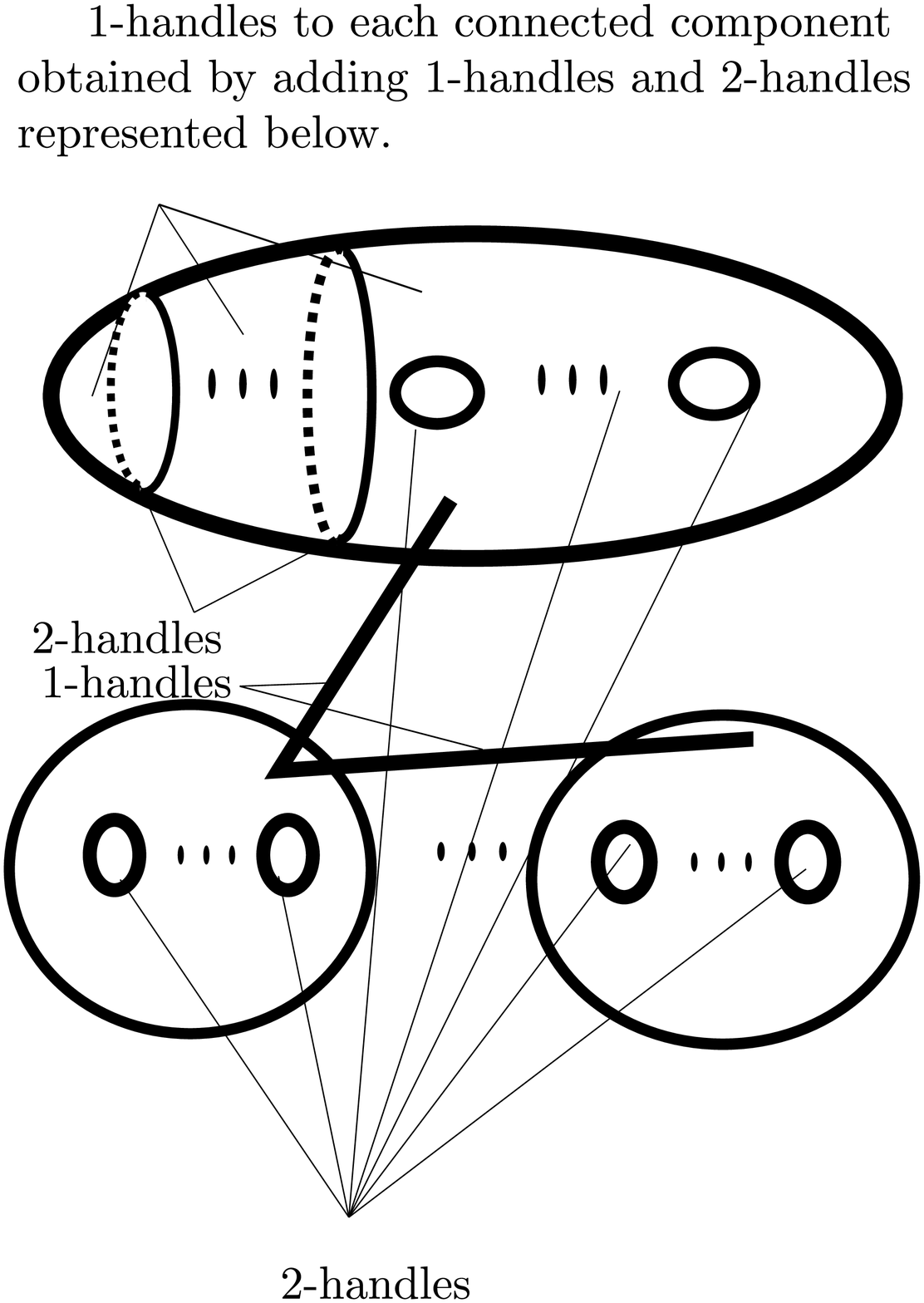}
\caption{Changing a surface ${\partial}_1 P$ into ${\partial}_2 P$ by adding handles at the same time. }
\label{fig:3}
\end{figure}
\begin{figure}
\includegraphics[width=35mm]{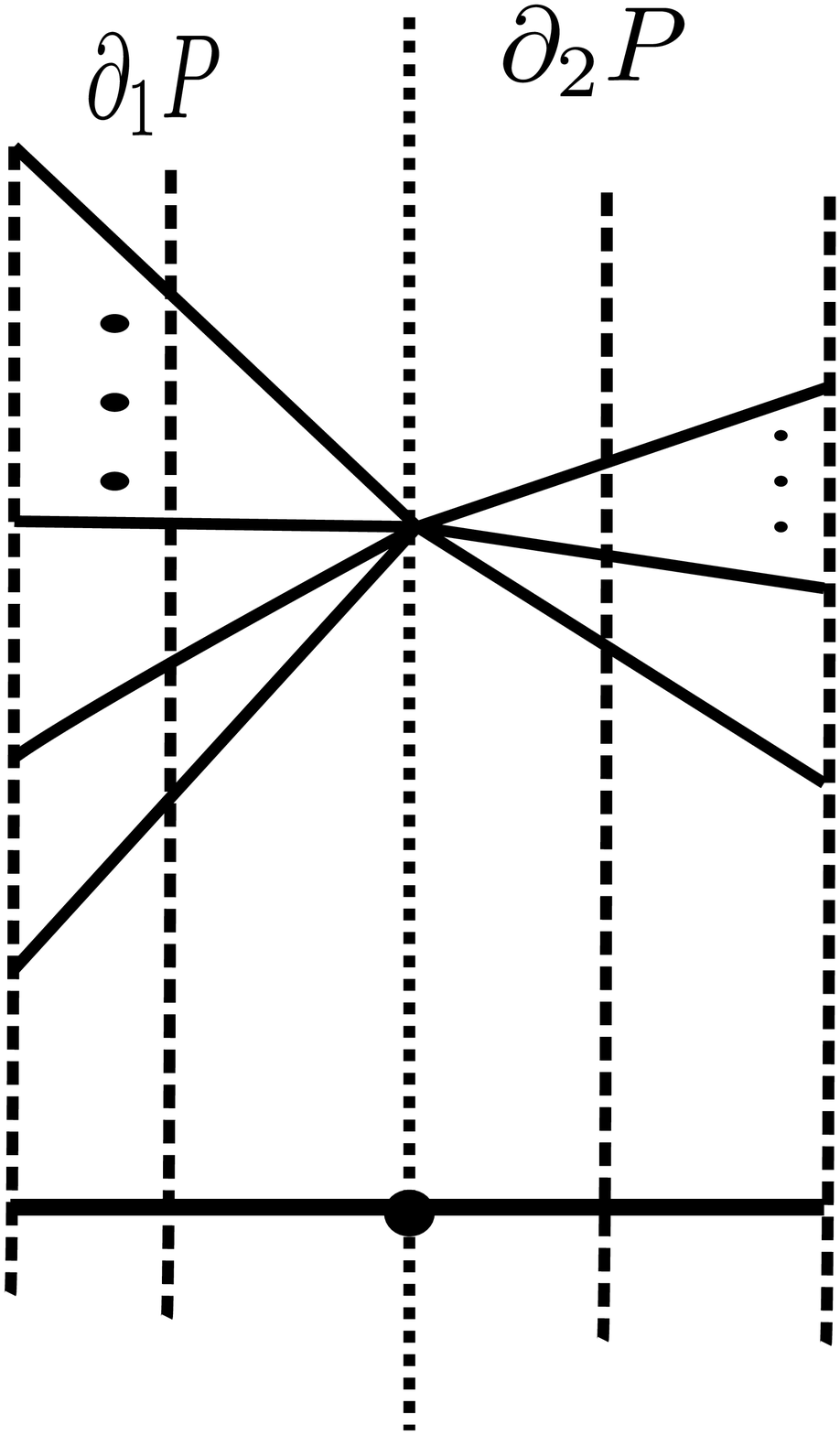}
\caption{A desired function on a desired compact manifold.}
\label{fig:4}
\end{figure}
\end{proof}

Note that related with the discussion, in \cite{masumotosaeki}, local construction of functions which may not be (stable) Morse as this on a surface was demonstrated to construct functions globally. \cite{michalak2} is also closely related to these studies: homology groups
 of Reeb graphs of (Morse) functions on fixed manifolds have been investigated.

\begin{proof}[Proof of Theorem \ref{thm:1}]
We first show the statement in the case where $n=1$.\\
\\
STEP 1 \\
For each element of the generator of $A_2$, we construct a local function as in Proposition \ref{prop:1}. For any generator $a$, we can represent $a$ as $a_{+}+a_{-}$ where $a_{+}$ is a linear combination of elements representing oriented diffeomorphism types of closed, connected and oriented spheres such that each coefficient is positive and where $a_{-}$ is a linear combination of elements representing oriented diffeomorphism types of closed, connected and oriented spheres such that each coefficient is negative. For example, if $a$ represents the $2$-dimensional sphere $S^2$, then we represent this by $ka+(-(k-1)a)$ for a positive integer $k>1$. By virtue of Proposition \ref{prop:1}, we set ${\partial_1} P$ as a surface consisting of connected surfaces where there exist only surfaces such that the corresponding coefficients of the oriented diffeomorphism types in $a_{+}$ are not zero and where the coefficients indicate the numbers of the connected spheres, set ${\partial_2} P$ similarly and we can obtain a Morse
 function. For example, if $a$ represents the $2$-dimensional sphere $S^2$, then we can construct a function as presented in FIGURE \ref{fig:2} where $a_1$, $a_2$ and $a_3$ represent $S^2$. We consider a copy of each Morse function, glue each pair of functions on the boundaries canonically and we obtain a finite number of local Morse functions (FIGURE \ref{fig:5}). We extend each function to a surjective smooth map over $N$ by setting the map as a trivial bundle over the complement of the original image of the local function. \\

\begin{figure}
\includegraphics[width=55mm]{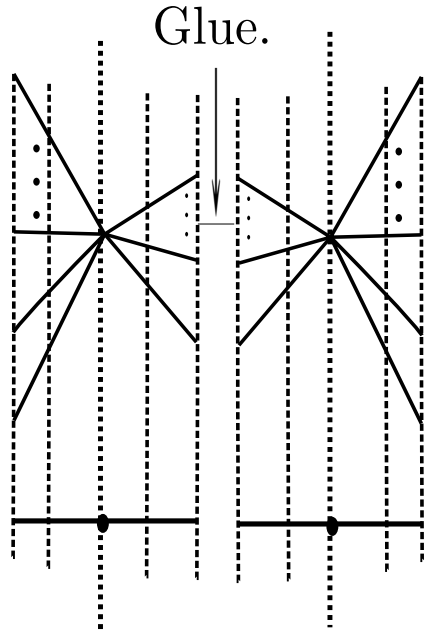}
\caption{Local Morse functions obtained by gluing two copies of local Morse functions as in Proposition \ref{prop:1}.}
\label{fig:5}
\end{figure}

\noindent STEP 2 \\
We add trivial bundles over $N$ such that fibers are closed and connected oriented surfaces and that the oriented diffeomorphism types of the surfaces are not in the module $A_1$ if we need. We need to construct a map so that as inverse images of regular values, every closed and connected orientable surface whose corresponding element is not in $A_1$ must appear.\\
\\ 
STEP 3 \\
Take a closed interval in the regular value set and its inverse image. We can change the local function there into a function as in Proposition \ref{prop:1} and this makes the source manifold of the global map over a connected manifold $N$ connected. See also
 FIGURE \ref{fig:6}. Note also that the fact $N$ is closed makes $M$ closed.
\begin{figure}
\includegraphics[width=55mm]{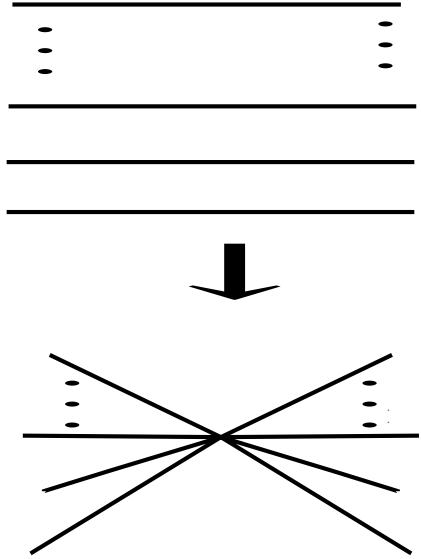}
\caption{An operation in STEP 3.}
\label{fig:6}
\end{figure}

\ \\
For the case where $n>1$, in STEP 1, we need to consider the product of the identity map on $S^{n-1}$ and each function, which is a fold map as a result, so that one of the region of the complement of the image, being a small closed tubular neighborhood of the singular value set, is a standard closed disc of dimension $n$ and we extend this by constructing trivial bundles over the complement of the original image of the local fold map. We can demonstrate operations in STEP 2 and STEP 3 similarly. See also FIGURE \ref{fig:7}

\begin{figure}
\includegraphics[width=55mm]{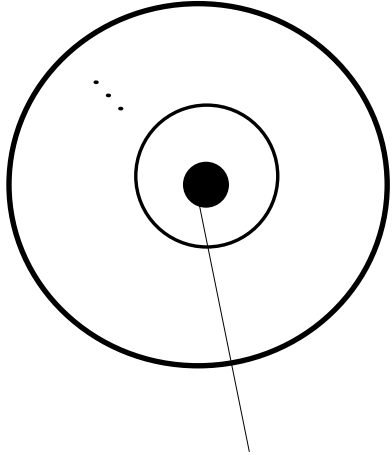}
\caption{The case where $n>1$ holds.}
\label{fig:7}
\end{figure}

\end{proof}
\begin{Rem}
We cannot show Theorem \ref{thm:1} in the case of ${\mathcal{N}}_2(R)$ or the version without "oriented" similarly. The obstruction to this case is the fact that ${\mathbb{R}}P^2$ is not null-cobordant, for example.
\end{Rem}

\end{document}